\newcommand{\R}{\mathbb{R}}
\newtheorem{theorem}{Theorem}
\newtheorem{proposition}[theorem]{Proposition}
\newtheorem{lemma}[theorem]{Lemma}
\newtheorem{assumption}[theorem]{Assumption}
\theoremstyle{remark}
\newtheorem{remark}[theorem]{Remark}
\numberwithin{equation}{section}
\numberwithin{theorem}{section}
\numberwithin{table}{section}
\numberwithin{figure}{section}
\title{On Dispersive Blow-ups for the Nonlinear Schr\"odinger Equation}
\author{Younghun Hong}
\address{The University of Texas at Austin}
\email{yhong@math.utexas.edu}
\author{Maja Taskovic}
\thanks{AMS Subject Classification:  35Q55 (35B44, 35L67, 35L70)}
\thanks{Mailing address: The University of Texas at Austin,
Department of Mathematics, RLM 8.100,\\
2515 Speedway Stop C1200,
Austin, Texas 78712}
\email{mtaskovic@math.utexas.edu}
\date{\today}
\begin{document}

\begin{abstract}
In this article, we provide a simple method for constructing dispersive blow-up solutions to the nonlinear Schr\"odinger equation. Our construction mainly follows the approach in Bona, Ponce, Saut and Sparber \cite{BPSS}. However, we make use of the dispersive estimate to enjoy the smoothing effect of the Schr\"odinger propagator in the integral term appearing in Duhamel's formula. In this way, not only do we simplify the argument, but we also reduce the regularity requirement to construct dispersive blow-ups. In addition, we provide more examples of dispersive blow-ups by constructing solutions that blow up on a straight line and on a sphere. 
\end{abstract}

\maketitle

\section{Introduction}

In this paper, we extend the recent work of  Bona, Ponce, Saut and Sparber \cite{BPSS} by lowering the regularity requirement and  constructing few more examples of dispersive blow-up solutions for the nonlinear Schr\"odiner equation (NLS) in $\mathbb{R}^d$ with $d\geq 2$:
\begin{equation}\label{NLS}
i \partial_t u + \Delta u \pm |u|^{p-1} u = 0,\ u(0)=u_0,
\end{equation}
where $u=u(t,x): I\times\mathbb{R}^d\to \mathbb{C}$ and $I\subset \mathbb{R}$.

The term dispersive blow-up, abbreviated by DBU, was coined by Bona and Saut \cite{BS93} to describe smooth solutions that develop a blow up at a single point. To be precise, we say that $u(t,x)$ has a \textit{dispersive blow-up} at a point $(t_*, x_*) \in \R \times \R^d$ if
\begin{equation}
\lim_{(t,x) \rightarrow (t_*, x_*)} |u(t,x)| = +\infty,
\end{equation}
and if $u(t,x)$ is continuous everywhere except at $(t_*, x_*)$. In other words, a dispersive blow up is a point-wise phenomena, while usual blow-up solutions to the dispersive PDEs typically blow-up in a certain Sobolev norm.

The theory of the dispersive singularity has its roots in the paper by Benjamin, Bona and Mahony \cite{BBM}, where it is remarked that the linearized Korteweg-de Vries equation (KdV) can develop singularity at a singe point. Bona and Saut \cite{BS93} further studied dispersive blow-ups in depth in the context of KdV. They proved the existence of solutions of KdV that develop point singularities in finite time due to the focusing effect related to the dispersive properties of the equation.

In recent years, dispersive blow-ups were extended to the framework of the Schr\"odinger equation. In \cite{BS10}, Bona and Saut analyzed dispersive blow-ups for the one-dimensional linear and nonlinear Schr\"odinger equations. Later, Bona, Ponce, Saut and Sparber \cite{BPSS} extended the theory to multiple dimensions and relaxed restriction on the nonlinearity.

The general strategy for constructing dispersive blow-ups in \cite{BBM, BS93,BS10,BPSS} consists of two steps. The first step is to construct a dispersive blow-up to the linear equation by choosing an appropriate smooth initial data explicitly. The next step is prove that the same initial data leads to dispersive blow-up for the nonlinear problem by showing that the nonlinearity does not destroy the formation of the singularity. The formation of a singularity is associated only with the dispersion relation in the linear part of the equation. This is the reason that such a singularity formation is called a ``dispersive" blow-up.

In this paper, following the general strategy in \cite{BBM, BS93,BS10,BPSS}, we construct dispersive blow-ups for NLS at an arbitrary point. Throughout the paper, we make the following assumption, which is natural since it guarantees local well-posedness of the equation in $H^s(\R^d)$, see Propostion \ref{LWP}.

\begin{assumption}\label{A}
Assume
\begin{align}
s>\frac{d}{2} - \frac{2}{p-1},\quad  s \geq 0, \quad \mbox{and} \;\; p>1.
\end{align}
When $p$ is not an even integer or when $s \geq 2$, we further assume that
\begin{equation}
\left\{
	\begin{array}{ll}
	 p>s-1, & \mbox{if} \;\; 2 \leq s < 4, \;\; \mbox{and} \;\; 1<s<\frac{d}{2}\\
	 p>s-2, & \mbox{if} \;\; s \geq 4, \;\; \mbox{and} \;\; 1<s<\frac{d}{2}\\
	 p > 1+ \lfloor s \rfloor, & \mbox{if} \;\; s \geq \frac{d}{2}.
	\end{array}
\right. 
\end{equation}
\end{assumption}

\begin{theorem}[DBU at a point]\label{MainTheorem}
Let $(t_*, x_*) \in\R\times\R^d$ with $d\geq 2$. For any $s \in ( \frac{d}{2} - \frac{2}{p}, \frac{d}{2}] $, for which $p$ and $s$ satisfy the conditions in the Assumption \ref{A}, there exists initial data $u_0 \in H^s(\R^d) \cap C^\infty(\R^d)$ such that the solution $u(t)$ to NLS with initial data $u_0$ blows up at $(t_*,x_*)$, that is,
$$\lim_{(t,x)\rightarrow (t_*, x_*)} |u(t,x)| =  + \infty.$$
Moreover, $u(t,x)$ is continuous in $\R\setminus\{t_*\} \times \R^d$, and $u(t_*, \cdot)$ is continuous on $\R^d \setminus\{x_*\}$.
\end{theorem}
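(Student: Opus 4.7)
The plan follows the two-step pattern of Bona--Ponce--Saut--Sparber: first exhibit a solution of the free Schr\"odinger equation that develops a pointwise singularity at $(t_*, x_*)$, and then use Duhamel's formula to show that the nonlinear correction is continuous and hence preserves the blow-up. For the linear part I would prescribe $u_0$ on the Fourier side by
$$\widehat{u}_0(\xi) = e^{it_*|\xi|^2}\, e^{-i\xi\cdot x_*}\, f(\xi), \qquad f(\xi) = (1+|\xi|^2)^{-b/2},$$
with $b$ chosen in $(s + \tfrac{d}{2}, d]$, replaced by the logarithmically sharpened profile $(1+|\xi|^2)^{-d/2}(\log(2+|\xi|^2))^{-\alpha}$ with $\alpha \in (\tfrac12, 1]$ in the critical case $s = \tfrac{d}{2}$. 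Plancherel gives $u_0 \in H^s$, while nonstationary-phase bounds exploiting the oscillating factor $e^{it_*|\xi|^2}$ place $u_0 \in C^\infty(\R^d)$. The free evolution is
$$[e^{it\Delta}u_0](x) = (2\pi)^{-d}\int_{\R^d} e^{i\xi\cdot(x-x_*) + i(t_*-t)|\xi|^2}\, f(\xi)\, d\xi,$$
which is $C^\infty$ away from $(t_*, x_*)$ by the same nonstationary-phase argument, and diverges at $(t_*, x_*)$ because $f \notin L^1$ by the choice $b \leq d$. A scaling/Bessel-potential asymptotic along arbitrary approaches then yields $|e^{it\Delta}u_0(x)| \to \infty$ uniformly as $(t, x) \to (t_*, x_*)$.

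Proposition \ref{LWP} next produces a solution $u \in C([0, T]; H^s)$ of \eqref{NLS} with $T > t_*$. Writing
$$u(t) = e^{it\Delta}u_0 + N(t), \qquad N(t) \defeq \pm i \int_0^t e^{i(t-\tau)\Delta}\bigl(|u|^{p-1}u\bigr)(\tau)\, d\tau,$$
the theorem reduces to showing that $N$ is continuous on $[0, T]\times\R^d$. For this, I would combine the dispersive estimate
$$\|e^{is\Delta}g\|_{L^q_x} \lesssim |s|^{-d(1/2-1/q)}\|g\|_{L^{q'}_x}, \qquad 2 \leq q \leq \infty,$$
with the Sobolev embedding $H^s \hookrightarrow L^r$ applied to $u(\tau)$, which places the nonlinearity $|u|^{p-1}u(\tau)$ in $L^{r/p}_x$, to establish that $N(t) \in H^{s'}_x$ uniformly in $t \in [0, T]$ for some $s' > \tfrac{d}{2}$. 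Sobolev embedding then pushes $N$ into $C_t L^\infty_x$, and joint continuity in $(t, x)$ follows by dominated convergence applied to the Duhamel integrand. Because $N$ is continuous at $(t_*, x_*)$ while $e^{it\Delta}u_0$ blows up there, $u = e^{it\Delta}u_0 + N$ inherits the dispersive blow-up, and continuity of $u$ off $(t_*, x_*)$ follows summand by summand.

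The main obstacle is the regularity gain for $N$: the dispersive estimate trades temporal integrability for spatial regularity, but the temporal factor $|t-\tau|^{-d(1/2-1/q)}$ is integrable in $\tau$ only when $q$ is close to $2$, whereas pushing $N$ into $L^\infty_x$ requires $q$ close to $\infty$. Splitting the time integral into a tail near $\tau = t$, treated via an $H^s$ bound on the nonlinearity, and a bulk part $[0, t - \varepsilon]$, where the dispersive estimate is applicable, is the natural way to reconcile these. The hypothesis $s > \tfrac{d}{2} - \tfrac{2}{p}$ is the sharp balance at which both requirements become simultaneously feasible through the Sobolev calculus for $|u|^{p-1}u$ supplied by Assumption \ref{A}; this is exactly the improvement over the BPSS scheme, which effectively required $s > \tfrac{d}{2}$ outright.
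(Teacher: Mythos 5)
Your overall architecture (linear DBU plus a Duhamel term shown to be continuous) matches the paper, and your linear construction is essentially the Fourier-dual of the paper's: prescribing $\widehat{u}_0(\xi)=e^{it_*|\xi|^2}e^{-i\xi\cdot x_*}(1+|\xi|^2)^{-b/2}$ is equivalent, after the free evolution to time $t_*$, to the paper's chirped physical-space data $\alpha e^{-i|x|^2/4t_*}(1+|x|^2)^{-m}$, and both reduce the singularity at $(t_*,x_*)$ to the Bessel kernel asymptotics. That part is fine.

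The gap is in the nonlinear step. You propose to show $N(t)\in H^{s'}_x$ uniformly in $t$ for some $s'>\tfrac{d}{2}$, but the mechanism you describe cannot produce this: the dispersive estimate maps $L^{q'}\to L^{q}$ and gains integrability, not derivatives, so placing $|u|^{p-1}u(\tau)$ in $L^{r/p}_x$ and applying it yields at best an $L^{(r/p)'}_x$ bound on $N(t)$ with zero derivatives. To land $N(t)$ in a Sobolev space of order $s'>\tfrac{d}{2}\geq s$ you would have to put $s'$ derivatives on the nonlinearity, which by fractional Leibniz requires $u$ itself to have $s'>s$ derivatives --- circular. The paper's Proposition \ref{smoothing} resolves exactly this tension differently: it measures $N$ in $W^{\sigma,q}_x$ with $\sigma=\tfrac{d-2}{2}+\epsilon<\tfrac{d}{2}$ and $q=\tfrac{2d}{d-2+\epsilon}>2$, so that $\sigma>d/q$ gives the embedding $W^{\sigma,q}\hookrightarrow C^0$ while $d(\tfrac12-\tfrac1q)=\tfrac{2-\epsilon}{2}<1$ keeps the dispersive time factor integrable all the way to $\tau=t$ (no bulk/tail splitting is needed); the derivative count never exceeds $s$, the $\sigma$ derivatives are distributed by fractional Leibniz, and a final Sobolev embedding is where the threshold $s>\tfrac{d}{2}-\tfrac{2}{p}$ enters. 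In short, the $L^\infty_x$ control of $N$ is bought with extra integrability at fixed sub-$d/2$ regularity, not with extra regularity; your plan as stated would fail at the step ``Sobolev embedding then pushes $N$ into $C_tL^\infty_x$.'' (A minor point: BPSS did not require $s>\tfrac{d}{2}$ but $s>\tfrac{d}{2}-\tfrac{1}{2(p-1)}$; the present paper's improvement is to $s>\tfrac{d}{2}-\tfrac{2}{p}$.)
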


\begin{remark}
The assumption $s\leq \frac{d}{2}$ is necessary since dispersive blow up does not exist in $H^s$ when $s>\frac{d}{2}$ due to the embedding $H^s \hookrightarrow L^\infty$ for $s>\frac{d}{2}$.
\end{remark}

The novelty of this theorem lies in its simpler proof. In particular, we use dispersive estimate to show that the nonlinear part does not affect the blow up of the linear part. As a consequence we lower regularity requirement on the initial data.

We also provide new examples of dispersive blow-ups by constructing solutions that blow up along a stright line in $\R^d$.

\begin{theorem}[DBU along a line]\label{MainTheorem2}
Let $t_* \in \R$ and let $l$ be a straight line in $\R^d$.  Suppose that $1<p<4$. For any $s \in ( \frac{d}{2} - \frac{2}{p}, \frac{d}{2}] $, for which $p$ and $s$ satisfy the conditions in the Assumption \ref{A}, there exists initial data $u_0 \in H^s(\R^d) \cap C^\infty(\R^d)$ such that the corresponding solution $u$ to NLS blows up along the line $l$ at time $t=t_*$, that is, for any $x_* \in l$
$$\lim\limits_{(t,x)\rightarrow (t_*, x_*)} |u(t,x)| = + \infty.$$
Moreover, $u(t,x)$ is continuous on $\R\setminus\{t_*\} \times \R^d$, and $u(t_*, \cdot)$ is continuous on $\R^d \setminus l$.
\end{theorem}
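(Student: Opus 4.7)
The plan is to realize line blow-up as a continuous superposition of point blow-ups, exploiting the translation invariance of the linear Schr\"odinger propagator. After a rigid motion we may assume $l = \R e_1$. Let $w_0 \in H^s(\R^d) \cap C^\infty(\R^d)$ be the initial data supplied by Theorem~\ref{MainTheorem} producing a DBU at $(t_*, 0)$, so that $v_* \defeq e^{it_*\Delta} w_0$ has an explicit power-type singularity at the origin of the form $|x|^{-\alpha}$ times a smooth nonvanishing factor (for some $\alpha > 1$ depending on $s$), while being continuous on $\R^d \setminus \{0\}$. I would take a strictly positive Schwartz function $\phi \in \mathcal{S}(\R)$, for instance $\phi(y) = e^{-y^2}$, and set
\[
u_0(x) \defeq \int_\R \phi(y)\, w_0(x - y e_1)\, dy.
\]
Translation invariance of the $H^s$ norm together with Minkowski's inequality gives $\|u_0\|_{H^s} \le \|\phi\|_{L^1}\|w_0\|_{H^s}$, and smoothness of $w_0$ plus the rapid decay of $\phi$ yield $u_0 \in C^\infty(\R^d)$. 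Proposition~\ref{LWP} then produces a corresponding NLS solution $u$.

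For the linear evolution, translation invariance of $e^{it\Delta}$ yields
\[
e^{it\Delta} u_0(x) = \int_\R \phi(y)\, (e^{it\Delta} w_0)(x - y e_1)\, dy.
\]
Fix $y_0 \in \R$ and write $x = y_0 e_1 + h$ with $h = h_\parallel e_1 + h_\perp$, $h_\perp \perp e_1$. After the substitution $z = y - y_0$ and localization to $|z| \le 1$, the integrand is controlled below by the singular part of $v_*$, yielding a contribution of size $\phi(y_0)\int_{-1}^1 \bigl((z - h_\parallel)^2 + |h_\perp|^2\bigr)^{-\alpha/2}\, dz \sim \phi(y_0)\, |h_\perp|^{1-\alpha}$, which diverges as $h_\perp \to 0$ because $\alpha > 1$ and $\phi(y_0) > 0$. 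Continuity of $e^{it\Delta} u_0$ off the line at $t = t_*$, and everywhere at times $t \ne t_*$, follows from dominated convergence using continuity of $v_*$ on $\R^d \setminus \{0\}$ and the rapid decay of $\phi$. A subtlety here is to separate the singular profile of $v_*$ from a lower-order remainder so that amplitudes do not cancel under the line convolution against $\phi$; this is handled by using the explicit profile built in Theorem~\ref{MainTheorem}.

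The nonlinear argument parallels that of Theorem~\ref{MainTheorem}: Duhamel's formula writes $u = e^{it\Delta} u_0 + N(u)$ with
\[
N(u)(t,x) = \mp i \int_0^t e^{i(t-\tau)\Delta}\bigl(|u|^{p-1} u\bigr)(\tau)\, d\tau,
\]
and the goal is to prove $N(u) \in C(\R \times \R^d)$, so that the singularity/continuity structure of $u$ exactly matches that of $e^{it\Delta} u_0$. As in Theorem~\ref{MainTheorem}, one uses the dispersive estimate to gain the $L^\infty$ smoothing in the Duhamel term. The genuinely new difficulty is that $|u|^{p-1} u$ is now singular along an entire unbounded line at time $t_*$ rather than at an isolated point, with transverse singularity of order $|h_\perp|^{(1-\alpha)(p-1)}$ and no parallel decay. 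The hardest step will be to show that this nonlinearity still lies in the appropriate $L^1_t L^r_x$-type space required to close the dispersive estimate, uniformly in the parallel direction $x_1 \in \R$. This is precisely where the restriction $1 < p < 4$ enters: it guarantees that, after balancing the transverse singularity exponent against the codimension $d-1$ of the line and the $t^{-d/2}$ dispersive rate, $N(u)$ is bounded and continuous on all of $\R \times \R^d$, which finishes the proof.
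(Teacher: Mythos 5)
Your strategy --- superposing translates of the point-DBU data along the line --- is genuinely different from the paper's, which instead uses a tensor-product datum $u_0(x)=\big(e^{-z^2}\big)^\vee(x_1)\cdot \alpha e^{-i|x|^2/4t_*}(1+|\bar x|^2)^{-m}$ whose blow-up factor is a $(d-1)$-dimensional Bessel-potential profile in the transverse variables $\bar x$. Unfortunately your route has a quantitative gap that prevents it from covering the stated range $1<p<4$. The point singularity produced by Theorem \ref{MainTheorem} at time $t_*$ is $|x|^{-\alpha}$ with $\alpha=d-2m$, and the constraint $u_0\in H^s$ for some $s>\frac d2-\frac2p$ forces (via Lemma \ref{nonlinear initial data}) $2m>s+\frac d2$, hence $\alpha<\frac2p$. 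Your line average $\int_{-1}^{1}\bigl((z-h_\parallel)^2+|h_\perp|^2\bigr)^{-\alpha/2}\,dz$ diverges as $h_\perp\to 0$ only when $\alpha\ge 1$; for $\alpha<1$ it converges and the superposition is \emph{bounded} near the line --- averaging in one direction costs exactly one power of the singularity. So you need $1<\alpha<\frac2p$, which is possible only for $p<2$. For $2\le p<4$ your construction produces no blow-up at all. (Your assertion that $\alpha>1$ holds automatically is also false even for $p<2$: Theorem \ref{MainTheorem} allows any $m\in(\max\{\frac d2-\frac1p,\frac d4\},\frac d2]$, and $m$ near $\frac d2$ gives $\alpha$ near $0$ or a log singularity; you would have to choose $m$ specifically to force $\alpha>1$.) The paper avoids this loss entirely: its transverse profile is a genuine $(d-1)$-dimensional singularity that only needs $d-1-2m\ge 0$ to blow up (even a log suffices), and the $H^s$ condition relaxes to $2m>s+\frac{d-1}{2}$ because of the dimension reduction; compatibility of these is exactly $p<4$. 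This is also precisely the distinction the introduction draws between the $1<p<2$ and $1<p<4$ regimes.

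Separately, you misplace where the restriction $p<4$ enters. The nonlinear step involves no new difficulty from the singularity being supported on an unbounded line: Proposition \ref{smoothing} bounds the Duhamel term in $L^\infty_t C^0_x$ purely by $\|u\|_{L^\infty_t H^s_x}^p$, which local well-posedness supplies regardless of the geometry of the linear singularity. There is no need to place $|u|^{p-1}u$ in an $L^1_tL^r_x$ space ``uniformly in $x_1$''; the condition $p<4$ comes only from the existence of an admissible $m$ for the initial datum, i.e.\ from $\max\{\frac d2-\frac1p-\frac14,\frac{d-1}{4}\}<m\le\frac{d-1}{2}$. Finally, even in the regime $p<2$ where your superposition could work, you would still need to carry out the non-cancellation argument you flag (isolating the singular profile of $v_*$ from the oscillatory factor $e^{i|x|^2/4t_*}$ and the Bessel remainder before averaging), which is left as an acknowledged but unexecuted step.
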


It is of interest to construct a dispersive blow-up on a compact manifold. As an example, we construct a DBU on the unit sphere in $\mathbb{R}^3$. 

\begin{theorem}[DBU on a sphere in $\R^3$]\label{MainTheorem3}
Let $t_* \in \R$. Suppose also that $1<p<2$. Then for any positive $s \in ( \frac{3}{2}-\frac{2}{p}, \frac{3}{2}] $, there exists initial data $u_0 \in H^s(\R^3) \cap C^\infty(\R^3)$ such that the solution $u$ to NLS with initial data $u_0$ blows up along the sphere $\mathcal{S}^2$ at time $t=t_*$, that is, for any $x_* \in \mathcal{S}^2$
$$\lim\limits_{(t,x)\rightarrow (t_*, x_*)} |u(t,x)| = + \infty.$$
Moreover, $u(t,x)$ is continuous on $\R\setminus\{t_*\} \times \R^3$, and $u(t_*, \cdot)$ is continuous on $\R^3 \setminus\mathcal{S}^2$.
\end{theorem}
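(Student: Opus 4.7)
My plan follows the two–step strategy of Theorems~\ref{MainTheorem} and~\ref{MainTheorem2}: first construct a radial $u_0$ whose free Schr\"odinger evolution is continuous off $\{t_*\}\times\mathcal{S}^2$ and blows up everywhere on the unit sphere at time $t_*$; then, using Duhamel's formula,
\[
u(t) = e^{it\Delta}u_0 \;\pm\; i\int_0^t e^{i(t-s)\Delta}\bigl(|u|^{p-1}u\bigr)(s)\,ds,
\]
show that the integral term is continuous on $\R\times\R^3$, so the linear DBU on $\mathcal{S}^2$ is inherited by the full NLS solution.

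For the linear step I would exploit the three–dimensional radial reduction. For radial $u(t,x)=U(t,|x|)$ the identity $\Delta u = r^{-1}\partial_r^2(rU)$ shows that $v(t,r) := rU(t,r)$, extended oddly across $r=0$, satisfies the one–dimensional free Schr\"odinger equation $iv_t+v_{rr}=0$. Under this correspondence, a DBU of $u$ at time $t_*$ on the unit sphere translates into a single–point DBU of $v$ at $(t_*,1)$, which is available by the one–dimensional argument of Bona--Saut~\cite{BS10}. Concretely, pick $\phi$ to be an odd, rapidly decaying function that is smooth except at $r=\pm 1$ where it has a mild (for instance logarithmic) singularity, and set $v_0 := e^{-it_*\partial_r^2}\phi$. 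The free propagator is an isometry on $H^s(\R)$ and, applied to a well–localized datum, produces a smooth function, so $v_0\in H^s(\R)\cap C^\infty(\R)$. Taking $u_0(x):=v_0(|x|)/|x|$ gives a radial $C^\infty(\R^3)$ datum (smoothness at the origin coming from $v_0(0)=0$); the Fourier–side identity for radial functions in $\R^3$, which in this parametrization reads $\widehat{u_0}(\xi)=\frac{2\pi i}{|\xi|}\widehat{v_0}(|\xi|)$, translates the $H^s(\R)$ bound on $v_0$ into the desired $H^s(\R^3)$ bound on $u_0$. Then $e^{it\Delta}u_0(x)=v(t,|x|)/|x|$ is continuous off $\{t_*\}\times\mathcal{S}^2$ and satisfies $\lim_{(t,x)\to(t_*,x_*)}|e^{it\Delta}u_0(x)|=+\infty$ for every $x_*\in\mathcal{S}^2$.

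For the Duhamel step, I would control the integral term in $L^\infty_x$ by combining the dispersive estimate $\|e^{i\tau\Delta}f\|_{L^\infty_x}\lesssim |\tau|^{-3/2}\|f\|_{L^1_x}$ (and its $L^{q'}\to L^q$ interpolants) with the Strichartz bounds on $u$ coming from the local well–posedness of Proposition~\ref{LWP}. The hypothesis $1<p<2$ enters precisely to guarantee that $|u|^{p-1}u$ has enough integrability and that the resulting time integral converges through $t=t_*$. A continuity–in–parameters argument then upgrades the resulting bound to genuine continuity of the Duhamel term on $\R\times\R^3$, so that the limit $|u(t,x)|\to\infty$ at points of $\{t_*\}\times\mathcal{S}^2$ comes entirely from the linear piece.

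The principal obstacle is the Duhamel step. Because the blow–up set is now a two–dimensional manifold rather than a point or a line, the geometry leaves noticeably less room for dispersive smoothing than in Theorems~\ref{MainTheorem} and~\ref{MainTheorem2}, and this is exactly what forces the stricter range $1<p<2$; verifying carefully that the dispersive/Strichartz gains dominate the nonlinear contribution at the low–regularity endpoint $s$ near $\tfrac{3}{2}-\tfrac{2}{p}$ is the technical heart of the argument. The linear step, by contrast, is a relatively direct reduction to the known one–dimensional point DBU via the correspondence $v=rU$, and its only subtleties are the smoothness of $u_0$ at the origin and the Sobolev transfer along this correspondence.
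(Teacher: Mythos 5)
Your linear construction is essentially the paper's: reduce to one dimension via $v(t,r)=r\,u(t,r)$, produce a point DBU of $v$ at $(t_*,\pm 1)$, and lift back to a spherical DBU of $u=v/r$. The paper realizes this with the explicit datum \eqref{v0} (two modulated Bessel-potential profiles centered so that the evolution at $t_*$ is singular exactly at $r=\pm1$), whereas you define $v_0$ as the backward free evolution of a singular profile $\phi$; that is morally the same thing, but it leaves you with an extra lemma to prove, namely that $e^{-it_*\partial_r^2}\phi$ is $C^\infty$ for a $\phi$ that is merely rapidly decaying with an integrable local singularity (true, via differentiation under the integral in \eqref{lin-sol}, but it must be said). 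Your use of the \emph{odd} extension and the Fourier-side identity $\widehat{u_0}(\xi)=\tfrac{2\pi i}{|\xi|}\widehat{v_0}(|\xi|)$ is a genuinely different (and arguably cleaner) route to $u_0\in H^s(\R^3)$ than the paper's Lemma \ref{app-sphere}, which works with an even $v_0$ and uses the fractional Leibniz rule plus a Hardy--Lorentz inequality, at the price of the restriction $s<\tfrac12$.

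The gap is in the Duhamel step, which you correctly identify as the crux but then misdiagnose. No new dispersive/Strichartz analysis is needed and the geometry of the blow-up set plays no role there: the paper applies the \emph{same} smoothing estimate, Proposition \ref{smoothing}, verbatim, which bounds the integral term in $L^\infty_t C^0_x$ whenever $s>\tfrac{3}{2}-\tfrac{2}{p}$ and $u\in L^\infty_t H^s_x$. The restriction $1<p<2$ does not come from a weaker smoothing effect on a two-dimensional singular set; it comes from the \emph{data}: a radial function of the form $v_0(|x|)/|x|$ whose free evolution is singular on a codimension-one set can only be placed in $H^s(\R^3)$ for $s<\tfrac12$ (in the paper this is the constraint $2m>s+\tfrac12$ with $m\le\tfrac12$ from Lemma \ref{nonlinear initial data} and Lemma \ref{app-sphere}), and compatibility with the smoothing threshold $s>\tfrac32-\tfrac2p$ forces $\tfrac32-\tfrac2p<\tfrac12$, i.e.\ $p<2$; this is exactly the parameter window \eqref{m3}. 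Your proposal never performs this bookkeeping, and the vague appeal to ``enough integrability of $|u|^{p-1}u$'' would not by itself produce the condition $p<2$. To close the argument you should simply verify $u_0\in H^s(\R^3)$ for some admissible $s$, quote Proposition \ref{LWP} and Proposition \ref{smoothing}, and extract the constraint on $p$ from the resulting inequalities on $m$ and $s$.
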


The key ingredient for proving Theorem \ref{MainTheorem}, \ref{MainTheorem2} and \ref{MainTheorem3} is the smoothing estimate in Proposition \ref{smoothing}. It will be used in the second step of the general strategy, that is, for controlling the nonlinear term. Proposition \ref{smoothing} simplifies and improves the corresponding proposition in Bona-Ponce-Saut-Sparber \cite{BPSS} (Proposition 4.1) in lowering the regularity requirement from $s \in ( \frac{d}{2} - \frac{1}{2(p-1)}, \frac{d}{2}]$ to $s \in ( \frac{d}{2} - \frac{2}{p}, \frac{d}{2}]$. This is achieved via the use of  the dispersive estimate.

We remark that reducing regularity requirement in the smoothing estimate is not just a purely mathematical issue. It is helpful for including higher power nonlinearities in construction of a dispersive blow-up along a line. Indeed, by the argument in Section 3 with the previously known smoothing estimate in \cite{BPSS}, one can construct a nonlinear dispersive blow-up along a line only for sub-quadratic nonlinearity, i.e., $1<p<2$. It is due to the fact that in the construction, we use an initial data making a dispersive blow-up in $\mathbb{R}^{d-1}$ (see \eqref{varphi}). However, by the smoothing estimate in Proposition \ref{smoothing}, one can include any  nonlinearity $1<p<4$. Similarly, the condition on $p$ in Theorem \ref{MainTheorem3} is also relaxed by a smoothing estimate with a low regularity.

\subsection*{Outline of the paper} We prove Theorem \ref{MainTheorem} in Section 2, Theorem \ref{MainTheorem2} in Section 3. and Theorem \ref{MainTheorem3} in Section 4.

\section{Construction of a DBU at a Point: Proof of Theorem \ref{MainTheorem}}

In this section, we prove the main theorem (Theorem \ref{MainTheorem}) following the approach in Bona, Ponce, Saut and Sparber \cite{BPSS}. Before we begin, we note that both linear and nonlinear Schr\"odinger equations are invariant under the spatial translation, i.e., if $u(t,x)$ solves the equation, then $u(t,x-x_*)$ solves the same equation for any $x_*\in\mathbb{R}^d$. Thus, it suffices to construct a DBU that blows up at the origin in $\mathbb{R}^d$.

The proof of Theorem \ref{MainTheorem} proceeds in two steps: (1) Construction of a DBU for the linear equation. (2) Proving that the nonlinearity does not break singularity formation. 
\subsection{Construction of a linear DBU} Consider the linear Schr\"odinger equation
\begin{equation}\label{LS}
i \partial_t u  + \Delta u = 0,\ u(0) = u_0.
\end{equation}
By the following proposition, one can explicitly construct a DBU to the linear equation \eqref{LS} at the origin at an arbitrary time. 
\begin{proposition}[Linear DBU at a point; \cite{BPSS}, Lemma 2.1]\label{linear DBU at a point}
Let $u(t)$ be the solution to the linear Schr\"odinger equation with initial data 
\begin{equation}\label{initial data for DBU at a point}
u_0(x)=\frac{\alpha e^{-i|x|^2/4t_*}}{(1+|x|^2)^m}\in C^\infty(\mathbb{R}^d)\cap L^2(\mathbb{R}^d),
\end{equation}
where $\alpha\in\mathbb{R}$, $t_*\in\mathbb{R}$ and $m\in(\frac{d}{4},\frac{d}{2}]$. Then, $u(t,x)$ blows up at $(t_*,0)$, i.e., $|u(t,x)| \to + \infty$ as $(t,x)\to (t_*,0)$. Moreover, $u(t,x)$ is continuous in $ \R \setminus \{t_*\} \times \R^d$, and $u(t_*, x)$ is continuous in $\R^d \setminus{\{0\}}$.
\end{proposition}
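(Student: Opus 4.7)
The plan is to analyze $u(t,x) = e^{it\Delta} u_0(x)$ by direct computation with the Schr\"odinger kernel. Using $e^{it\Delta} f(x) = (4\pi i t)^{-d/2} \int e^{i|x-y|^2/4t} f(y)\,dy$ and expanding $|x-y|^2 = |x|^2 - 2x\cdot y + |y|^2$, the $|x|^2$-phase factors out of the integral and combines with the chirp in $u_0$ to give
\begin{equation*}
u(t,x) = \frac{\alpha\, e^{i|x|^2/4t}}{(4\pi i t)^{d/2}}\, I(t,x), \qquad I(t,x) := \int_{\mathbb{R}^d} e^{-ix\cdot y/(2t)}\, e^{i\lambda(t)|y|^2}\, \frac{dy}{(1+|y|^2)^m},
\end{equation*}
where $\lambda(t) := \tfrac{1}{4t} - \tfrac{1}{4t_*}$. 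The purpose of the chirp $e^{-i|x|^2/4t_*}$ in $u_0$ is precisely to make both oscillating phases inside $I$ trivialize at $(t,x)=(t_*,0)$.

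For the blow-up at $(t_*,0)$, observe that $I(t_*, 0) = \int (1+|y|^2)^{-m}\,dy = +\infty$ since $m \leq d/2$. To promote this to $|u(t,x)|\to\infty$ as $(t,x)\to(t_*,0)$, I would split $I = I^{\mathrm{in}}_R + I^{\mathrm{out}}_R$ at $|y|=R$. The dominated convergence theorem gives $I^{\mathrm{in}}_R(t,x) \to \int_{|y|<R}(1+|y|^2)^{-m}\,dy$ as $(t,x)\to(t_*,0)$, and this quantity diverges with $R$. For the tail, I would integrate by parts against the phase $\lambda(t)|y|^2 - x\cdot y/(2t)$ using a cutoff localized away from its critical point $y = x/(4t\lambda(t))$, exploiting the amplitude decay $(1+|y|^2)^{-m}$ (with $m > d/4$ ensuring enough integrability) to get a bound on $|I^{\mathrm{out}}_R|$ that is uniform in $(t,x)$ near the focal point and vanishes as $R\to\infty$. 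Choosing $R$ large first and then $(t,x)$ sufficiently close to $(t_*,0)$ yields $|I(t,x)| \to +\infty$.

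For continuity outside $(t_*,0)$, I would argue separately on the slices $t=t_*$ and $t\neq t_*$. At $t=t_*$, $I(t_*,x)$ reduces to a constant multiple of $\widehat{g}(x/(2t_*))$ where $g(y) = (1+|y|^2)^{-m}$, and this Fourier transform is the classical Bessel potential $c_{d,m}|\xi|^{m-d/2}K_{d/2-m}(|\xi|)$, which is real-analytic on $\mathbb{R}^d\setminus\{0\}$ with exponential decay. For $t\neq t_*$, completing the square in $y$ converts $I$ into a genuine oscillatory integral with nondegenerate quadratic phase $\lambda(t)|z|^2$ times a smooth polynomially decaying amplitude, and joint continuity in $(t,x)\in\{t\neq t_*\}\times \mathbb{R}^d$ follows from standard integration-by-parts estimates. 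Joint continuity at points $(t_*, x_0)$ with $x_0\neq 0$ follows from the same dyadic splitting as above, but now with the nondegenerate linear phase $e^{-ix_0\cdot y/(2t_*)}$ controlling the tail uniformly as $t\to t_*$.

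The technical heart is the uniform tail estimate as $(t,x)\to(t_*,0)$, because neither phase alone oscillates throughout $\mathbb{R}^d$ in this limit. In the region $|y|\gg |x|/(|t|\,|\lambda(t)|)$ the quadratic phase $\lambda(t)|y|^2$ controls the integration-by-parts gain, while in $|y|\ll |x|/(|t|\,|\lambda(t)|)$ the linear phase does; a dyadic decomposition adapted to this transition scale, coupled with the amplitude decay $m>d/4$, is needed to obtain bounds uniform in $(t,x)$. Once this estimate is in place, the rest reduces to classical Fourier and Bessel-function analysis.
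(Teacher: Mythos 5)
Your setup---factoring out the quadratic chirp via the explicit kernel, recognizing $I(t_*,\cdot)$ as the Fourier transform of $g(y)=(1+|y|^2)^{-m}$, i.e.\ a Bessel-type kernel $c\,|\xi|^{m-d/2}K_{d/2-m}(|\xi|)$, and reading off both the divergence at the origin and the continuity away from it---is exactly the paper's route (the paper likewise defers continuity on $t\neq t_*$ to the Bona--Saut argument). The genuine gap is in your mechanism for the joint limit $(t,x)\to(t_*,0)$. You propose that $I^{\mathrm{in}}_R$ converges to $\int_{|y|<R}g\,dy$ (which grows with $R$) while $|I^{\mathrm{out}}_R|$ admits a bound uniform in $(t,x)$ near the focal point that vanishes as $R\to\infty$. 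That tail bound is provably false, and the scheme is in fact backwards: for each \emph{fixed} $R$ the bulk $\int_{|y|<R}g(y)\,dy$ is finite, so if $|I^{\mathrm{out}}_R|$ were uniformly bounded near $(t_*,0)$ for even one fixed $R$, then $|I|$ would be bounded there, contradicting the blow-up you have already established on the slice $t=t_*$. Concretely, $I^{\mathrm{out}}_R(t_*,x)=\widehat{g}\bigl(\tfrac{x}{2t_*}\bigr)-\int_{|y|<R}e^{-ix\cdot y/(2t_*)}g(y)\,dy\to+\infty$ as $x\to 0$ for every fixed $R$: the divergence of $I$ lives entirely in the tail $|y|>R$, where the phase degenerates, not in the bulk, so no amount of integration by parts or dyadic decomposition can produce the upper bound you want.

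What is needed instead is a \emph{lower} bound on the near-stationary tail contribution (or, as in the paper and in \cite{BPSS}, the exact identification of $u(t_*,\cdot)$ with the Bessel kernel, giving the quantitative rates $|u(t_*,x)|\sim|x|^{-(d-2m)}$ for $2m<d$ and $\sim-\log|x|$ for $2m=d$, supplemented by an argument controlling $u(t,x)-u(t_*,x)$ for $t$ near $t_*$). The remaining parts of your proposal---the Bessel asymptotics for continuity and blow-up on the slice $t=t_*$, and the nondegenerate-phase argument for joint continuity on $t\neq t_*$---are sound and coincide with the paper's sketch.
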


\begin{proof}[Sketch of the Proof]
\noindent The well-known representation of the solution to the linear problem \eqref{LS}
\begin{align}\label{lin-sol}
u(t,x) \; = \; \frac{1}{(4 \pi i t)^{d/2}} \; \int_{\R^d} e^{i \frac{|x-y|^2}{4t}} \; u_0(y) \; dy,
\end{align}
implies that
\begin{align*}
u(t_*, x) \, 
& = \, \frac{1}{(4\pi i t_*)^{d/2}} \, e^{\frac{i|x|^2}{4t_*}} 
\int_{\R^d} e^{\frac{-2i x \cdot y}{4t_*}} \, \frac{\alpha}{(1+|y|^2)^m} \; dy.
\end{align*}
The above integral is the Fourier transform of a Bessel potential (see e.g. \cite{Stein}) . Therefore
\begin{align} \label{K}
u(t_*, x) \, = \,  \frac{C\, \alpha}{(4\pi i t_*)^{d/2}}  \;  e^{\frac{i|x|^2}{4t_*}} \; |\tfrac{x}{4t_*}|^{-\nu} \, K_\nu (|\tfrac{x}{4t_*}|),
\end{align}
where $K_\nu(|x|)$ is the Fourier transform of $\frac{1}{(1+|y|^2)^m}$ and  $\nu = \frac{d}{2} - m$. By the properties of the Bessel potential, $K_\nu$ behaves as $K_\nu(|x|) \sim \frac{C}{|x|^\nu}$ for $\nu >0$ and $K_0(|x|) \sim -\log(|x|)$ as $x \rightarrow 0$. Therefore
\begin{align*}
& u(t_*, x) \sim |x|^{-(d-2m)} , \quad \mbox{as} \; x \rightarrow 0, \quad \mbox{if}\;  d-2m>0, \\
& u(t_*, x) \sim -\log(|x|)  \quad \mbox{as} \; x \rightarrow 0, \quad \mbox{ if} \; d-2m=0.
\end{align*}
Since $2m \leq d$, we conclude that $u(t_*, 0) = \infty$. Also, by \eqref{K} and the properties of the Bessel potential, the continuity of $u(t_*, x)$ in $\R^d \setminus{\{0\}}$ follows. The continuity of  $u(t,x)$ in $ \R \setminus \{t_*\} \times \R^d$ can be proved as in [\cite{BS10}, Theorem 2.1.].
\end{proof}

\subsection{Construction of a nonlinear DBU}
Now, we consider the Cauchy problem for NLS \eqref{NLS}. It is well-known that this equation is locally well-posed in a sub-critical space (see \cite{Caz} for instance, and \cite{P} for relaxed conditions in case of non-integer $p$). To be precise, we define the solution to the NLS with initial data $u_0$ as the solution to the integral equation
\begin{equation}\label{Duhamel}
u(t)=e^{it\Delta}u_0\pm i\int_0^t e^{i(t-s)\Delta}(|u|^{p-1}u)(s)ds.
\end{equation}
We call $(q,r)$ \textit{admissible} if $2\leq q,r\leq\infty$, $(q,r,d)\neq(2,\infty, 2)$ and 
$$\frac{2}{q}+\frac{d}{r}=\frac{d}{2}.$$

\begin{proposition}[Local well-posedness in $H^s(\R^d)$, \cite{Caz, P}]\label{LWP}
Let $p$ and $s$ satisfy the conditions in Assumption \ref{A}.  For such choice of $s$, we have: \\
$(i)$ If $u_0\in H^s$, then there exist a time interval $I=I(u_0)$ and a unique solution $u(t)\in C_{t\in I}H_x^s\cap L_{t\in I}^qW_x^{s,r}$ to the NLS with initial data $u_0$ for all admissible pairs $(q,r)$.\\
$(ii)$ If $\|u_0\|_{H^s}$ is sufficiently small, then $u(t)$ exists in $H^s$ globally in time.
\end{proposition}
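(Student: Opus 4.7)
The plan is a standard contraction-mapping argument on the Duhamel formula \eqref{Duhamel}, closed via Strichartz estimates together with nonlinear estimates in fractional Sobolev spaces. This is the Cazenave-Weissler framework \cite{Caz}; the refinements that handle non-integer $p$ at higher $s$ are due to Pausader \cite{P}.

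First, I would pick an admissible pair $(q,r)$ tuned to $(s,p)$ so that $W^{s,r}(\R^d)$ controls the nonlinearity pointwise via Sobolev embedding, and set up the solution space
\[
X_{T,R} = \Bigl\{u \in C([0,T]; H^s(\R^d)) \cap L^q([0,T]; W^{s,r}(\R^d)) : \|u\|_{L_t^\infty H_x^s} + \|u\|_{L_t^q W_x^{s,r}} \leq R\Bigr\},
\]
metrized by the weaker norm $d(u,v) = \|u - v\|_{L_t^q L_x^r}$ to avoid estimating differences of the nonlinearity at top regularity. Define
\[
\Phi(u)(t) = e^{it\Delta} u_0 \pm i \int_0^t e^{i(t-\tau)\Delta} (|u|^{p-1} u)(\tau)\, d\tau.
\]
Homogeneous Strichartz bounds the linear piece by $C \|u_0\|_{H^s}$, and the inhomogeneous Strichartz estimate reduces the Duhamel term to controlling $\||u|^{p-1} u\|_{L_t^{\tilde q'} W_x^{s, \tilde r'}}$ for a second admissible pair $(\tilde q, \tilde r)$.

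The crucial nonlinear step is an estimate of the form $\||u|^{p-1} u\|_{W_x^{s, \tilde r'}} \lesssim \|u\|_{L_x^\infty}^{p-1} \|u\|_{W_x^{s, \tilde r'}}$, which combined with H\"older in time yields a bound $C T^\theta R^p$ with $\theta > 0$ guaranteed by the strictly subcritical condition $s > \frac{d}{2} - \frac{2}{p-1}$. When $p$ is an even integer, $|u|^{p-1} u$ is polynomial in $u$ and $\bar u$ and Leibniz suffices. For non-integer $p$, the map $z \mapsto |z|^{p-1} z$ has only limited H\"older regularity, so estimating its $W^{s,\tilde r'}$ norm requires either the Christ-Weinstein fractional chain rule (for $0 < s < 1$) or Moser-type estimates via Taylor expansion (for $s \geq 1$). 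The number of derivatives one can legitimately put on the nonlinearity is controlled by this regularity, which is exactly what produces the technical conditions $p > s-1$, $p > s-2$, $p > 1+\lfloor s \rfloor$ in Assumption \ref{A}. This fractional chain rule is the main obstacle in the whole argument.

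Once the nonlinear estimate is in place, choosing $R = 2C \|u_0\|_{H^s}$ and $T$ small enough depending on $\|u_0\|_{H^s}$ gives $\Phi(X_{T,R}) \subset X_{T,R}$, and a Lipschitz version of the same estimate yields contraction in the metric $d$. The Banach fixed-point theorem together with uniqueness in the Strichartz class and persistence of regularity then yields (i). For (ii), if $\|u_0\|_{H^s}$ is sufficiently small, one runs the same argument on $[0,\infty)$ using global-in-time Strichartz in scale-invariant spaces, so that the $R^p$ factor absorbs without needing an extra time factor from H\"older.
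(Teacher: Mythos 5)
The paper does not actually prove this proposition---it is quoted from Cazenave \cite{Caz} and Pecher \cite{P} (note: \cite{P} is Pecher, not Pausader)---and your outline is exactly the standard Kato/Cazenave--Weissler contraction argument from those references, including the correct role of the fractional chain rule in producing the technical conditions of Assumption \ref{A}, so the approach matches what the paper relies on. The one point to tighten: since $s\le \frac{d}{2}$ is allowed here, the factor $\|u\|_{L^\infty_x}^{p-1}$ in your model nonlinear estimate is \emph{not} controlled by the $X_{T,R}$ norms ($H^s\not\hookrightarrow L^\infty$ in this range), so the fractional chain rule must be applied with a finite H\"older exponent, i.e.\ $\||u|^{p-1}u\|_{W^{s,\tilde r'}_x}\lesssim \|u\|_{L^{\sigma}_x}^{p-1}\|u\|_{W^{s,r}_x}$ with $W^{s,r}\hookrightarrow L^{\sigma}$, which is precisely how the admissible pair is tuned in \cite{Caz, P}.
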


Following the approach in Bona-Ponce-Saut-Sparber \cite{BPSS}, we aim to show that the initial data $u_0$, given by \eqref{initial data for DBU at a point}, produces a DBU for the nonlinear problem at the same point and at the same time as for the linear problem. To this end, first, we need to make sure that $u_0\in H^s$ for $s>\frac{d}{2}-\frac{2}{p-1}$, because otherwise, we do not know that the solution to NLS exists. The following lemma provides a sufficient condition.

\begin{lemma}[\cite{BPSS}, Lemma 3.3.]\label{nonlinear initial data}
The initial data $u_0$, given in \eqref{initial data for DBU at a point}, is contained in $H^s$ if $2m > s + \frac{d}{2}$.
\end{lemma}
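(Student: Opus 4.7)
The plan is to split cases on whether $s$ is an integer. For integer $s=k$, I would differentiate $u_0 = e^{-i|x|^2/4t_*}(1+|x|^2)^{-m}$ directly via the Leibniz rule. Each derivative falling on the chirp brings out a factor bounded by $\langle x\rangle$ (since $\nabla e^{-i|x|^2/4t_*}=-\tfrac{ix}{2t_*}e^{-i|x|^2/4t_*}$), while derivatives of $(1+|x|^2)^{-m}$ retain (or improve) the decay rate $\langle x\rangle^{-2m}$. Combining, for any multi-index $|\alpha|\leq k$,
$$|\partial^\alpha u_0(x)|\;\lesssim_{k,m,t_*}\;\langle x\rangle^{|\alpha|-2m}.$$
This is square-integrable on $\R^d$ exactly when $2(|\alpha|-2m)+d<0$, i.e., $2m>|\alpha|+d/2$; summing over $|\alpha|\leq k$ gives $u_0\in H^k$ under the hypothesis.

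For non-integer $s$, Sobolev embedding alone would only yield the strictly stronger condition $2m>\lceil s\rceil+d/2$, so I would instead pass to the Fourier side. Completing the square in the phase of $\widehat{u_0}(\xi)=\alpha\int_{\R^d}e^{-ix\cdot\xi-i|x|^2/4t_*}(1+|x|^2)^{-m}\,dx$ and recognizing the resulting Gaussian oscillatory integral as convolution by the free Schr\"odinger kernel at time $-t_*$ produces
$$\widehat{u_0}(\xi)\;=\;c(d,t_*)\,e^{it_*|\xi|^2}\,\bigl(e^{-it_*\Delta}f\bigr)(-2t_*\xi),\qquad f(x):=\alpha(1+|x|^2)^{-m}.$$
A change of variables $\eta=-2t_*\xi$ then converts the Fourier weight into a physical-space weight on the free evolution of $f$:
$$\|u_0\|_{H^s(\R^d)}^2\;\sim_{s,t_*,d}\;\bigl\|\langle x\rangle^s e^{-it_*\Delta}f\bigr\|_{L^2_x(\R^d)}^{2}.$$

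To bound this weighted $L^2$ norm, I would invoke the Galilean boost $J_j(t)=x_j+2it\partial_j$, which commutes with $i\partial_t+\Delta$. This produces the identity $x_j e^{-it_*\Delta}g=e^{-it_*\Delta}(x_jg)+2it_*\partial_j e^{-it_*\Delta}g$; iterating yields, for integer $k\geq 0$,
$$\bigl\|\langle x\rangle^k e^{-it_*\Delta}g\bigr\|_{L^2}\;\lesssim_{k,t_*}\;\|\langle x\rangle^k g\|_{L^2}+\|g\|_{H^k},$$
and complex interpolation between the weighted Sobolev intersection spaces extends this estimate to every real $s\geq 0$. Applied to $g=f$: the weighted term equals $\bigl(\int\langle x\rangle^{2s-4m}\,dx\bigr)^{1/2}<\infty$ precisely under $2m>s+d/2$, while $\|f\|_{H^s}<\infty$ follows from $m>d/4$ (which is implied by the hypothesis) since $\widehat{f}$ is a Bessel potential with exponential decay at infinity. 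The principal obstacle throughout is handling the non-integer case; only via the Fourier identity does the sharp moment condition $2m>s+d/2$ emerge uniformly in $s$.
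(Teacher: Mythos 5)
The paper does not actually prove this lemma: it is quoted verbatim from Bona--Ponce--Saut--Sparber (\cite{BPSS}, Lemma 3.3) and used as a black box, so there is no in-paper argument to compare against. Judged on its own, your proof is correct. The integer case is an elementary Leibniz computation, and your observation that crude Sobolev embedding would only give the lossy condition $2m>\lceil s\rceil+d/2$ correctly identifies why the fractional case needs a different idea. The Fourier-side identity is right: completing the square shows $\widehat{u_0}(\xi)=c(d,t_*)e^{it_*|\xi|^2}(e^{-it_*\Delta}f)(-2t_*\xi)$ with $f=\alpha(1+|\cdot|^2)^{-m}$, so $\|u_0\|_{H^s}\sim\|\langle x\rangle^s e^{-it_*\Delta}f\|_{L^2}$, and the vector fields $x_j+2it_*\partial_j$ (which commute with one another and with the flow) reduce matters to $\|\langle x\rangle^s f\|_{L^2}+\|f\|_{H^s}$; the first term converges exactly when $2m>s+\frac d2$ and the second when $m>\frac d4$, which follows since $s\geq 0$. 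The only places where you lean on nontrivial ``standard facts'' are (i) the bound $\|x^\beta\partial^\gamma g\|_{L^2}\lesssim\|\langle x\rangle^k g\|_{L^2}+\|g\|_{H^k}$ for the cross terms produced by iterating the commutator identity, and (ii) the complex interpolation of the weighted Sobolev scale $H^s\cap L^2(\langle x\rangle^{2s}dx)$ to reach non-integer $s$; both are classical (e.g.\ via domains of powers of the harmonic oscillator), but a fully self-contained write-up should cite or sketch them. Your route is essentially the natural one --- the chirp factor is absorbed into a free Schr\"odinger evolution of the Bessel-potential profile --- and it delivers the sharp moment condition uniformly in $s$, which is precisely what the lemma asserts.
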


Next, we show that the integral term in the Duhamel formula \eqref{Duhamel} stays bounded during the existence time.

\begin{proposition}[Smoothing estimate]\label{smoothing} Let $s>\frac{d}{2} - \frac{2}{p}$. Suppose that $u(t)$ solves NLS \eqref{NLS} on a time interval $I$. Then, we have
\begin{equation}\label{eq: smoothing}
\Big\|\int_0^t e^{i(t-s)\Delta}(|u|^{p-1}u)(s)ds\Big\|_{L_{t\in I}^\infty C^0_x}\leq |I|^{\frac{4+p(2s-d)}{6}}\|u\|_{L_{t\in I}^\infty H_x^s}^p.
\end{equation}
\end{proposition}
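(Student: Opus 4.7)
My plan is to push the $L^\infty_x$ norm inside the time integral in Duhamel's formula via the triangle inequality, reducing the problem to bounding $\|e^{i(t-s)\Delta}(|u|^{p-1}u)(s)\|_{L^\infty_x}$ by an integrable time singularity $|t-s|^{-\beta}$ times a quantity controlled by $\|u\|_{L^\infty_t H^s_x}^p$. The key ingredient will be the $L^{r'}$--$L^r$ dispersive inequality
\[
\|e^{i\tau\Delta} f\|_{L^r_x}\le C|\tau|^{-d\left(\frac{1}{2}-\frac{1}{r}\right)}\|f\|_{L^{r'}_x},\qquad r\in[2,\infty],
\]
which is exactly the ingredient the authors flag as their improvement over \cite{BPSS}.

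For a suitable $r\in(2,\infty)$ and a fractional order $\sigma>d/r$, I would use the Sobolev embedding $W^{\sigma,r}\hookrightarrow L^\infty$ together with the commutativity of $\langle D\rangle^\sigma$ with the propagator to write
\[
\|e^{i\tau\Delta}F\|_{L^\infty_x}\le C\|e^{i\tau\Delta}\langle D\rangle^\sigma F\|_{L^r_x}\le C|\tau|^{-d\left(\frac{1}{2}-\frac{1}{r}\right)}\|\langle D\rangle^\sigma F\|_{L^{r'}_x},
\]
and then control $\|\langle D\rangle^\sigma(|u|^{p-1}u)\|_{L^{r'}_x}$ by $\|u\|_{H^s}^p$ via a fractional Leibniz/chain rule together with the Sobolev embedding of $H^s$ into the relevant Lebesgue spaces (invoking the refined estimates from \cite{P} when $p$ is not an even integer). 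The exponents $r$ and $\sigma$ should be tuned so that $\beta:=d(\tfrac12-\tfrac1r)=\tfrac{2+p(d-2s)}{6}$; then the assumption $s>\tfrac d2-\tfrac 2p$ is precisely the condition $\beta<1$, so that $\int_0^{|I|}\tau^{-\beta}\,d\tau\le C|I|^{1-\beta}$ with $1-\beta=\tfrac{4+p(2s-d)}{6}$, reproducing the claimed exponent on $|I|$.

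The main obstacle will be verifying that the parameter choices can be made compatibly under the given constraints: $\sigma>d/r$ is forced by the Sobolev embedding, while $\sigma\le s$ (or a close surrogate reachable from $\|u\|_{H^s}$ via additional Sobolev embedding on the derivative factor) is needed for the fractional Leibniz step to close, and $s\le d/2$ limits Sobolev's reach. I expect this bookkeeping to mirror the case split in Assumption \ref{A}, with the borderline behavior as $s\searrow \tfrac d2-\tfrac 2p$ corresponding exactly to $\beta\nearrow 1$ where the time singularity ceases to be integrable.
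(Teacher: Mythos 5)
Your proposal follows essentially the same route as the paper: Sobolev embedding $W^{\sigma,r}\hookrightarrow L^\infty$, Minkowski's inequality, the $L^{r'}\to L^r$ dispersive estimate commuted with $\langle D\rangle^\sigma$, a fractional Leibniz rule, and a final Sobolev embedding to reach $\|u\|_{H^s}^p$, with the exponents tuned so the time singularity $|t-s|^{-\beta}$ is just barely integrable. The paper's only cosmetic difference is that it parametrizes the choice by a small $\epsilon>0$ (taking $\sigma=\frac{d-2}{2}+\epsilon$, $r=\frac{2d}{d-2+\epsilon}$, so $\beta=\frac{2-\epsilon}{2}$), which at the extremal $\epsilon=\frac{4+p(2s-d)}{3}$ gives exactly your $\beta$ and the stated power of $|I|$.
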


The proof of the smoothing estimate is based on the dispersive estimate.
\begin{lemma}[Dispersive estimate]\label{dispersive} For $2\leq r\leq \infty$, we have
$$\|e^{it\Delta}f\|_{L^r(\R^d)}\lesssim |t|^{-d(\frac{1}{2}-\frac{1}{r})}\|f\|_{L^{r'}(\R^d)},$$ 
where $r'$ is a H\"older conjugate of $r$, that is $\frac{1}{r} + \frac{1}{r'} =1$.
\vspace{5pt}
\end{lemma}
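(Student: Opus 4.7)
The plan is to establish the two endpoint cases $r=2$ and $r=\infty$ first, and then recover the full range by interpolation. For the $L^2$ endpoint, the Schr\"odinger propagator $e^{it\Delta}$ is unitary on $L^2(\mathbb{R}^d)$: on the Fourier side it acts as multiplication by $e^{-it|\xi|^2}$, which has modulus one, so Plancherel gives $\|e^{it\Delta}f\|_{L^2}=\|f\|_{L^2}$. This matches the claimed inequality in the case $r=r'=2$, since the time exponent $-d(\tfrac{1}{2}-\tfrac{1}{r})$ vanishes.

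For the $L^\infty$ endpoint, I would invoke the explicit solution formula \eqref{lin-sol}, which writes $e^{it\Delta}f$ as convolution of $f$ against the kernel $(4\pi it)^{-d/2}e^{i|x|^2/(4t)}$. Since $|e^{i|x-y|^2/(4t)}|=1$, pulling absolute values inside the integral yields the pointwise bound
\begin{equation*}
|e^{it\Delta}f(x)|\;\leq\;\frac{1}{(4\pi|t|)^{d/2}}\int_{\mathbb{R}^d}|f(y)|\,dy,
\end{equation*}
which is exactly the claim for $r=\infty$, $r'=1$.

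Finally, viewing $e^{it\Delta}$ as a linear operator with these two boundedness properties simultaneously, Riesz--Thorin complex interpolation between $L^2\to L^2$ (norm $1$) and $L^1\to L^\infty$ (norm $(4\pi|t|)^{-d/2}$) produces, for $\theta\in[0,1]$, a bound $L^{p_\theta'}\to L^{p_\theta}$, where $\tfrac{1}{p_\theta}=\tfrac{1-\theta}{2}$, with operator norm at most $1^{1-\theta}\cdot(4\pi|t|)^{-d\theta/2}$. Solving $\tfrac{1}{r}=\tfrac{1-\theta}{2}$ gives $\theta=1-\tfrac{2}{r}$, and a direct check confirms that $p_\theta=r$, $p_\theta'=r'$, and the exponent on $|t|$ becomes $-\tfrac{d}{2}\theta=-d\bigl(\tfrac{1}{2}-\tfrac{1}{r}\bigr)$, which is precisely the desired estimate.

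There is no serious obstacle in this argument: both endpoint bounds are essentially immediate from objects already displayed in the excerpt (the unitarity of $e^{it\Delta}$ on $L^2$ and the kernel representation \eqref{lin-sol}), and Riesz--Thorin is a standard black box. The only point to be mindful of is the exclusion $r<2$, which is built into the hypothesis $2\leq r\leq\infty$ so that $r'\leq 2\leq r$ and interpolation between the stated endpoints is valid; the upper endpoint $r=\infty$ is included since the kernel bound is pointwise.
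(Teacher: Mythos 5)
Your proof is correct and is the standard argument: the paper states this lemma without proof as a classical fact, and the unitarity-plus-kernel-bound-plus-Riesz--Thorin route you give is exactly the canonical derivation found in the standard references. No issues.
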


\begin{proof}[Proof of Proposition \ref{smoothing}]
Let $\epsilon>0$ be an arbitrarily small number such that $\frac{dp  - 4+3\epsilon}{2p}\leq s$. Then, applying the Sobolev inequality, Minkowski inequality and the dispersive estimate, respectively, we obtain
\begin{align*}
\Big\|\int_0^t e^{i(t-s)\Delta}F(s)ds\Big\|_{L_{t\in I}^\infty C^0_x}&\lesssim \Big\|\int_0^t e^{i(t-s)\Delta}F(s)ds\Big\|_{L_{t\in I}^\infty W_x^{\frac{d-2}{2}+\epsilon,\frac{2d}{d-2+\epsilon}}}\\
&\leq \Big\|\int_0^t \|e^{i(t-s)\Delta}F(s)\|_{W_x^{\frac{d-2}{2}+\epsilon,\frac{2d}{d-2+\epsilon}}}ds\Big\|_{L_{t\in I}^\infty }\\
&\lesssim \Big\|\int_0^t \frac{1}{|t-s|^{\frac{2-\epsilon}{2}}}\|F(s)\|_{W_x^{\frac{d-2}{2}+\epsilon,\frac{2d}{d+2-\epsilon}}}ds\Big\|_{L_{t\in I}^\infty}.
\end{align*}
Note that the parameters in the above Sobolev inequality were chosen so that the power of $|t-s|$ obtained after application of the dispersive estimates is just barely integrable. We next integrate  the power of $|t-s|$, and obtain
\begin{equation}\label{smoothing estimate}
\begin{aligned}
\Big\|\int_0^t e^{i(t-s)\Delta}F(s)ds\Big\|_{L_{t\in I}^\infty C^0_x}&\lesssim \Big(\int_0^t \frac{1}{|t-s|^{\frac{2-\epsilon}{2}}}ds\Big)\|F\|_{L_{t\in I}^\infty W_x^{\frac{d-2}{2}+\epsilon,\frac{2d}{d+2-\epsilon}}}\\
&\lesssim |I|^{\frac{\epsilon}{2}}\|F\|_{L_{t\in I}^\infty W_x^{\frac{d-2}{2}+\epsilon,\frac{2d}{d+2-\epsilon}}}.
\end{aligned}
\end{equation}
Now, by applying  the fractional Leibnitz rule to \eqref{smoothing estimate} we obtain
$$\Big\|\int_0^t e^{i(t-s)\Delta}(|u|^{p-1}u)(s)ds\Big\|_{L_{t\in I}^\infty C^0_x}\lesssim |I|^{\frac{\epsilon}{2}}\|u\|_{L_{t\in I}^\infty W^{\frac{d-2}{2}+\epsilon, \frac{2dp}{p(d-2 +2\epsilon) - 3\epsilon +4}}}\|u\|_{L_{t\in I}^\infty L_x^\frac{2dp}{(p-1)(4-3\epsilon)}}^{p-1}.$$
Finally, another application of the Sobolev inequality leads to
$$\Big\|\int_0^t e^{i(t-s)\Delta}(|u|^{p-1}u)(s)ds\Big\|_{L_{t\in I}^\infty C^0_x}\lesssim |I|^{\frac{\epsilon}{2}}\|u\|_{L_{t\in I}^\infty H_x^{\frac{dp-4+3\epsilon}{2p}}}^p.$$
\end{proof}

\begin{remark}
$(i)$ The proof of Proposition \ref{smoothing} is very simple, but it improves the corresponding proposition in Bona, Ponce, Saut and Sparber \cite{BPSS} (see Proposition 4.1) in lowering the regularity requirement. The key in the proof is a Kato smoothing type estimate \eqref{smoothing estimate}. This estimate is related to the fact that the inhomogeneous term 
$$\int_0^t e^{i(t-s)\Delta}F(s)ds$$
satisfies better Strichartz estimates (see \cite{Fos}, for example). In \eqref{smoothing estimate}, we exploit the smoothing property of the linear Schr\"odinger flow by the dispersive estimate as long as the bound is integrable in time.\\
$(ii)$ The smoothing estimate \eqref{eq: smoothing} is sharp (except the endpoint case $s=\frac{d}{2}-\frac{2}{p}$) in the sense that if $s<\frac{d}{2}-\frac{2}{p}$, the inequality \eqref{eq: smoothing} is super-critical.
\end{remark}

\begin{proof}[Proof of Theorem \ref{MainTheorem}]
Fix any $t_*\in\mathbb{R}$. Combining conditions on $s, p,d$ and $m$ in Proposition \ref{linear DBU at a point}, \ref{LWP} and \ref{smoothing} and Lemma \ref{nonlinear initial data}, we have
$$\max\Big\{\frac{d}{2}-\frac{1}{p},\frac{d}{4}\Big\}<m\leq\frac{d}{2}.$$
It is obvious that one can always find $m$ that satisfies these bounds. Pick any such $m$ and let $u_0$ be initial data defined by \eqref{initial data for DBU at a point}. Let $u(t)$ be the solution to NLS with initial data $u_0$. Here, by choosing sufficiently small $\alpha$ (see \eqref{initial data for DBU at a point}), one may assume that $t_*$ is contained in the existence time of $u(t)$. Then, as a consequence of Proposition \ref{linear DBU at a point} and \ref{smoothing}, the linear part in the integral equation \eqref{Duhamel} blows up at $(t_*,0)$ and the nonlinear part remains bounded during the existence time. Therefore, we conclude that $u(t,x)$ blows up at $(t_*,0)$.
\end{proof}

\section{Construction of a DBU along a Line: Proof of Theorem \ref{MainTheorem2}}

In this section, we provide a new example of a dispersive blow-up, namely along a line.  Since both linear and nonlinear Schr\"odinger equations are invariant under the spatial translation and rotation, it suffices to construct a DBU along the line
\begin{equation} \label{line}
l_1 : = \{(x_1, 0, ..., 0) \in \R^d \; : \; x_1 \in \R\}.
\end{equation}
Following the general strategy, the DBU is first constructed for the linear problem. The idea is to find initial data that leads to a solution of the linear Schr\:odinger equation which  has a dispersive blow up in $d-1$ variables at  $(x_2, x_3, ..., x_d) = (0, 0, ..., 0) $ and which is regular in the first variable $x_1$. This will yield a DBU for the linear problem along the straight line $\{(x_1, 0, ..., 0) \in \R^d \; : \; x_1 \in \R\}$. Finally, the smoothing estimate Proposition \ref{smoothing} is used to  control the nonlinear part in the NLS and show that the same initial data generates a DBU solution for the nonlinear problem as well.

Throughout this section, we use the notation
\begin{align*}
x=(x_1, x_2, ..., x_d) = (x_1, \bar{x})\in\mathbb{R}\times\mathbb{R}^{d-1},\quad\bar{x} = (x_2, ..., x_d)\in\mathbb{R}^{d-1}.
\end{align*}

\subsection{Construction of a linear DBU} As a first step, we construct initial data that leads to a DBU solution to the linear problem \eqref{LS} that blows up along the line  $l_1$ \eqref{line}. The idea is to use initial data $u_0(x)=u_0(x_1,\bar{x})$ that separates variables $x_1$ and $\bar{x}$ in such a way that the factor depending on $x_1$ is regular, while the factor depending on $\bar{x}$ develops DBU at the point $\bar{0} \in \R^{d-1}$. Such an initial data will lead to DBU along the line $l_1$. 
\begin{proposition}[Linear DBU along a straight line] \label{linear DBU along a line}
 For any $y_1\in\mathbb{R}$, any $\bar{y} \in \R^{d-1}$ and $m \in \left( \frac{d-1}{4}, \frac{d-1}{2}\right]$, define the regular and the blow-up factor respectively by
$$\varphi_\textup{reg}(z)=e^{-z^2}, \qquad \varphi_\textup{DBU}(\bar{y})=\Big(\frac{1}{(1+\cdot^2)^m}\Big)^\wedge (\bar{y}).$$
For  some $\alpha \in \R$ and any $y \in \R^d$ let 
\begin{equation}\label{varphi}
\varphi_0(y)= \alpha\;\varphi_\textup{reg}(y_1) \; \varphi_\textup{DBU}(\bar{y}).
\end{equation}
Define the initial data $u_0$ by
\begin{equation}\label{i.d. for a line}
u_0(x) \;=\;  e^{-\frac{i|x|^2}{4t_*}} \; \varphi_0^\vee(x) \;=\; \left(e^{-z^2}\right)^\vee(x_1)  \; \; \frac{ \alpha \;  e^{-\frac{i|x|^2}{4t_*}}}{(1 + |\bar{x}|^2)^m}.
\end{equation}
Let $u(t)$ be the solution to the linear Schr\"odinger equation with the initial data $u_0$ \eqref{i.d. for a line}. Then, $u(t,x)$ blows up along the line  $l_1$ \eqref{line}, that is, for any $x_* \in l_1$, $|u(t,x)| \to + \infty$ as $(t,x)\to (t_*, x_*)$. Moreover, $u(t,x)$ is continuous in $ \R \setminus \{t_*\} \times \R^d$, and $u(t_*, x)$ is continuous in $\R^d \setminus l_1$.
\end{proposition}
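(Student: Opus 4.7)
The plan is to exploit the tensor-product structure of the initial data. Since $\varphi_0$ factorizes as a product of a function of $y_1$ and a function of $\bar y$, and the chirp splits as $e^{-i|x|^2/4t_*}=e^{-ix_1^2/4t_*}\,e^{-i|\bar x|^2/4t_*}$, we may write
$$u_0(x) = v_0(x_1)\, w_0(\bar x), \quad v_0(x_1)=e^{-ix_1^2/4t_*}\,(e^{-z^2})^{\vee}(x_1), \quad w_0(\bar x)=\frac{\alpha\, e^{-i|\bar x|^2/4t_*}}{(1+|\bar x|^2)^m}.$$
Because $\Delta=\partial_{x_1}^2+\Delta_{\bar x}$, uniqueness of solutions to \eqref{LS} implies that the solution factorizes as $u(t,x)=v(t,x_1)\,w(t,\bar x)$, where $v$ solves the $1$D linear Schr\"odinger equation with data $v_0$ and $w$ solves the $(d-1)$-dimensional linear Schr\"odinger equation with data $w_0$.

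The factor $w_0$ is precisely the DBU initial data from Proposition \ref{linear DBU at a point} in dimension $d-1$, which explains the imposed range $m\in(\frac{d-1}{4},\frac{d-1}{2}]$. Hence $w$ blows up at $(t_*,\bar 0)$, is continuous on $\R\setminus\{t_*\}\times\R^{d-1}$, and $w(t_*,\cdot)$ is continuous on $\R^{d-1}\setminus\{\bar 0\}$. For the regular factor, $v_0$ is Schwartz (a Gaussian multiplied by a chirp), so $v(t,\cdot)$ is smooth and bounded for every $t$. Evaluating the representation formula at $t=t_*$, the phase $(x_1-y_1)^2/4t_* - y_1^2/4t_*$ collapses to $(x_1^2-2x_1 y_1)/4t_*$, and an explicit Gaussian integration yields
$$v(t_*,x_1) \;=\; \frac{C\, e^{ix_1^2/4t_*}}{\sqrt{4\pi i t_*}}\, e^{-c\, x_1^2/t_*^2}$$
for some positive constants $C$ and $c$; in particular $v(t_*,\cdot)$ is nowhere vanishing on $\R$.

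Fixing any $x_*=(x_1^*,\bar 0)\in l_1$, continuity of $v$ furnishes $\delta>0$ and a spacetime neighborhood of $(t_*,x_1^*)$ on which $|v(t,x_1)|\ge\delta$, while $|w(t,\bar x)|\to+\infty$ as $(t,\bar x)\to(t_*,\bar 0)$. Then $|u(t,x)|=|v(t,x_1)|\,|w(t,\bar x)|\to+\infty$ as $(t,x)\to(t_*,x_*)$, proving the DBU along $l_1$. The global continuity of $u$ on $\R\setminus\{t_*\}\times\R^d$ and of $u(t_*,\cdot)$ on $\R^d\setminus l_1$ follow at once from the factorization together with the smoothness of $v$ everywhere and the continuity of $w$ away from $(t_*,\bar 0)$. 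The only delicate step is the explicit Gaussian computation verifying that $v(t_*,\cdot)$ never vanishes; once that is in hand, the argument reduces directly to Proposition \ref{linear DBU at a point}.
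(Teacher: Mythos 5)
Your proposal is correct and follows essentially the same route as the paper: both exploit the tensor structure of the chirped initial data and the cancellation of the quadratic phase at $t=t_*$, so that the blow-up reduces to the $(d-1)$-dimensional Bessel-potential singularity of Proposition \ref{linear DBU at a point} (the paper obtains this by evaluating the $d$-dimensional representation formula directly rather than by factorizing the flow into a $1$D and a $(d-1)$D Schr\"odinger evolution, but the computation is the same). One small point in your favor: you explicitly verify that the regular factor $v(t_*,\cdot)$ is nowhere vanishing (a Gaussian), which is the property actually needed for the product to blow up, whereas the paper's proof only remarks that $\varphi_{\textup{reg}}$ is bounded and implicitly uses its positivity.
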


\begin{proof}
The function $\varphi_\textup{reg}$ is smooth, while the function $\varphi_\textup{DBU}(\bar{y})$ is a Fourier transform of a Bessel potential. To ensure that  $\varphi_\textup{DBU}(\bar{y})$ has singularity only at $\bar{0}$, we need $m \leq \tfrac{d-1}{2}$ similarly to the discussion in Section 2. We also need that $m>\tfrac{d-1}{4}$ so that  $\varphi_\textup{DBU}^\vee$ is square integrable. Thus the bounds on $m$ in the statement of the proposition.

Using the representation  \eqref{lin-sol} for the solution $u(t,x)$, we get
\begin{align*}
u\Big(t_*, x\Big)
&=\frac{1}{(4\pi it_*)^{\frac{d}{2}}} \; e^{\frac{i|x|^2}{4t_*}} \; \int_{\mathbb{R}^d} e^{-iy\cdot \frac{x}{2t_*}} \varphi_0^\vee(y)dy\\
&=\frac{ e^{\frac{i|x|^2}{4t_*}}}{(4\pi it_*)^{\frac{d}{2}}} \; \; \varphi_0\Big(\frac{x}{2t_*}\Big)\\
&=\frac{\alpha \;  e^{\frac{i|x|^2}{4t_*}}}{(4\pi it_*)^{\frac{d}{2}}} \; \varphi_\textup{reg}\Big(\frac{x_1}{2t_*}\Big) \;\varphi_\textup{DBU}\Big(\frac{\bar{x}}{2t_*}\Big).
\end{align*}
Since $\varphi_\textup{reg}(\tfrac{x_1}{2t_*})$ is bounded for all $x_1 \in \R$ and $\varphi_\textup{DBU}(\bar{x})$ blows up as $\bar{x}\to\bar{0}$, we conclude that for any $x_1\in\mathbb{R}$, 
$$\big|u(t_*, x_1,\bar{x})\big|\to +\infty\quad\textup{ as }\bar{x}\to \bar{0}.$$
Thus, initial data $u_0$ leads to a solution with dispersive blow up along the straight line $l_1$. 
 The continuity of $u(t_*, x)$ in $\R^d \setminus{\{l_1\}}$ follows by properties of Bessel potential $\varphi_\textup{DBU}$ and smoothness of $\varphi_\textup{reg}$. The continuity of  $u(t,x)$ in $ \R \setminus \{t_*\} \times \R^d$ can be proved as in [\cite{BS10}, Theorem 2.1.].
\end{proof}

\subsection{Construction of a nonlinear DBU} We now show that the the initial data $u_0$ \eqref{i.d. for a line} will lead to the dispersive blow ups solution for the  NLS as well, along the same line and at the same time. 
\begin{proof}[Proof of Theorem \ref{MainTheorem2}]
 Fix any $t_*\in\mathbb{R}$. Due to the dimension reduction in the DBU part of the initial data \eqref{i.d. for a line}, Lemma \ref{nonlinear initial data} implies that $u_0\in H^s(\R^d)$ if $2m > s + \tfrac{d-1}{2}$. Combining this with $m \in (\tfrac{d-1}{4}, \tfrac{d-1}{2}]$ with conditions on $s, p,d$ and $m$  in Proposition \ref{LWP} and Proposition \ref{smoothing} we get
$$\max\Big\{\frac{d}{2}-\frac{1}{p} - \frac{1}{4}, \;\frac{d-1}{4}\Big\}<m\leq\frac{d-1}{2}.$$
$m$ that satisfies these bounds always exists. Pick one such $m$ and let $u_0$ be the initial data given by the formula \eqref{i.d. for a  line}. By choosing sufficiently small $\alpha$ (see \eqref{varphi}), one may assume that $t_*$ is contained in the  interval of existence for the solution to NLS with initial data $u_0$. Then, as a consequence of Proposition \ref{linear DBU along a line} and \ref{smoothing}, the linear part in the integral equation \eqref{Duhamel} blows up at $(t_*,0)$ and the nonlinear part remains bounded during the existence time. Therefore, we conclude that $u(t,x)$ blows up at $(t_*,0)$.
\end{proof}

\section{Construction of a DBU on a Sphere in $\mathbb{R}^3$}

In this section, we provide one more example of a dispersive blow up, by constructing a solution to the NLS in three dimensions that blows up on the unit sphere. We follow the general strategy and first consider the linear problem. The idea for constructing a linear DBU on a sphere is very simple. Under the radially symmetric assumption, one can easily transform the 3-dimensional linear Schr\"odinger equation into the 1-dimensional equation. Then, by Proposition \ref{linear DBU at a point}, we construct a DBU at a point on $\mathbb{R}$, say $r=1$. Translating it back to the 3-dimensional equation, we obtain a DBU on the unit sphere for the linear problem. Finally,  the nonlinear part is controlled via the smoothing estimate Proposition \ref{smoothing}.

\subsection{Construction of a linear DBU} As a first step, we construct a linear DBU that blows up along the unit sphere in $\R^3$.  Here is a precise statement of the claim.
\begin{lemma}[Linear DBU on the unit sphere $\mathcal{S}^2$ in $\R^3$]\label{linear DBU on a sphere}
Fix an arbitrary $t_* \in \R$. There exists initial data $u_0 \in C^\infty(\R^3) \cap L^2(\R^3)$  such that the corresponding global solution $u(t,x)$ to the Cauchy problem \eqref{LS} blows up along the sphere $\mathcal{S}^2$, i.e., for any $x_* \in \mathcal{S}^2$, $|u(t,x)| \to + \infty$ as $(t,x)\to (t_*,x_*)$. Moreover, $u(t,x)$ is continuous in $ \R \setminus \{t_*\} \times \R^3$, and $u(t_*, x)$ is continuous in $\R^3 \setminus \mathcal{S}^2$.
\end{lemma}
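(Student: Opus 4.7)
The plan is to exploit radial symmetry of the 3D linear Schr\"odinger equation to reduce the problem to a 1D one, where Proposition \ref{linear DBU at a point} is directly applicable. If $u(t,x) = v(t,|x|)$ is radial in $\R^3$, then $\Delta u = \tfrac{1}{r}\partial_r^2(rv)$, so the function $w(t,r) := r\,v(t,r)$ satisfies the 1D linear Schr\"odinger equation $i\partial_t w + \partial_r^2 w = 0$ on $\R$. Conversely, from any smooth odd solution $w$ one recovers a smooth radial 3D solution via $u(t,x) = w(t,|x|)/|x|$. Hence it suffices to build a 1D linear DBU whose singularity at time $t_*$ is located at $r = 1$; after reflecting back to 3D, the singular set becomes the unit sphere $\mathcal{S}^2$.

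The first step is to construct such a 1D DBU centered at $r = 1$ by translating the construction in Proposition \ref{linear DBU at a point} with $d = 1$. For some $m \in (\tfrac14, \tfrac12]$ and $\alpha \in \R$, set
$$w_0^+(r) \,:=\, \frac{\alpha\, e^{-i(r-1)^2/4t_*}}{\bigl(1+(r-1)^2\bigr)^m},$$
so that the corresponding 1D solution $w^+(t,r)$ blows up at $(t_*, 1)$ and is continuous elsewhere by the same Bessel-potential computation as in the proof of Proposition \ref{linear DBU at a point}. Since $w$ must be odd in order for $u$ to be smooth at the origin, I then antisymmetrize:
$$w_0(r) \,:=\, w_0^+(r) - w_0^+(-r).$$
As the 1D free propagator commutes with the reflection $r \mapsto -r$, the resulting solution is $w(t,r) = w^+(t,r) - w^+(t,-r)$, which is odd in $r$, blows up at $r = \pm 1$ at time $t_*$, and is continuous at all other space-time points.

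With this in hand I would define the 3D initial data by $u_0(x) := w_0(|x|)/|x|$. Because $w_0$ is smooth and odd its Taylor expansion at $0$ contains only odd powers of $r$, so $w_0(r)/r$ is a smooth even function; writing $w_0(r)/r = g(r^2)$ with $g$ smooth shows $u_0(x) = g(|x|^2) \in C^\infty(\R^3)$. Moreover $\|u_0\|_{L^2(\R^3)}^2 = 4\pi\int_0^\infty |w_0(r)|^2\,dr < \infty$ because $m > \tfrac14$. The radial solution $u(t,x) = w(t,|x|)/|x|$ then blows up on $\mathcal{S}^2$ at time $t_*$, since as $x \to x_* \in \mathcal{S}^2$ we have $w(t_*,|x|) \to \infty$ while $|x|$ stays bounded and bounded away from zero; global existence is automatic from unitarity of the free propagator on $L^2$.

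The main technical point is to verify the continuity claims after dividing by $|x|$ near the origin. Using the explicit Bessel-potential representation from the proof of Proposition \ref{linear DBU at a point} (and its reflected counterpart), $w^\pm(t,\cdot)$ is smooth away from its single singular space-time point, so $w(t,\cdot)$ is smooth and odd in a neighborhood of $r = 0$ for every $t$ (the only singularities, at $t = t_*$, sit at $r = \pm 1$, far from the origin). Hence $w(t,r)/r$ extends smoothly across $r = 0$, yielding continuity of $u(t,x)$ on $(\R \setminus \{t_*\}) \times \R^3$ and of $u(t_*,\cdot)$ on $\R^3 \setminus \mathcal{S}^2$, which completes the argument.
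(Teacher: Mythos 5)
Your proof is correct and follows the same overall strategy as the paper: reduce the radial $3$D problem to the $1$D free Schr\"odinger equation via $w = r\,v$, produce a $1$D dispersive blow-up at $r=1$ by translating the data of Proposition \ref{linear DBU at a point}, symmetrize so that the $1$D solution corresponds to a radial $3$D one, and divide by $|x|$. The one genuine difference is in the symmetrization, and it matters. The paper takes an \emph{even} profile $v_0(r)= \alpha e^{-i(r-1)^2/4t_*}(1+r^2)^{-m}+\alpha e^{-i(r+1)^2/4t_*}(1+r^2)^{-m}$, which does not vanish at $r=0$; consequently $u_0(x)=v_0(|x|)/|x|$ behaves like $c/|x|$ near the origin and is neither smooth nor bounded there, and the correspondence between the $1$D and $3$D equations degenerates at $x=0$. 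Your \emph{odd} antisymmetrization $w_0(r)=w_0^+(r)-w_0^+(-r)$ (with the denominators translated to $\pm1$ so that Proposition \ref{linear DBU at a point} applies verbatim by translation invariance) forces $w_0(0)=0$ to all odd orders, so $w_0(r)/r$ is a smooth even function and $u_0\in C^\infty(\R^3)$ as the lemma asserts; it also keeps the Fourier-side Bessel computation exactly the one already done in Section 2. So your variant is not just equivalent but repairs a defect in the printed construction. The only place you are slightly thinner than you should be is the continuity of $u(t,\cdot)=w(t,|\cdot|)/|\cdot|$ at the origin for $t\neq t_*$: oddness plus mere continuity of $w(t,\cdot)$ gives $w(t,0)=0$ but not boundedness of $w(t,r)/r$, so you do need the smoothness (or at least $C^1$ regularity) of $w(t,\cdot)$ near $r=0$ that you assert; this follows from the explicit kernel representation away from the singular time, and is at the same level of detail as the paper's own appeal to [\cite{BS10}, Theorem 2.1], but it deserves a sentence of justification.
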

\begin{proof} We will use radial symmetry  to construct such DBU. Suppose that $u$ is a radially symmetric solution to the linear Schr\"dingier equation:
$$i\partial_t u(t,x)+\Delta u(t,x)=i\partial_t u(t,r)+\partial_r^2 u(t,r)+\frac{2}{r}\partial_ru(t,r)=0.$$
Let $v(t,r)$ be the even extension of $ru(t,r)$. Then, $v(t)$ solves the $1$-dimensional linear Schr\"odinger equation 
\begin{equation}\label{1dSchrodinger}
i\partial_t v+\partial_r^2v=r\Big(i\partial_t u+\partial_r^2 u+\frac{2}{r}\partial_r u\Big)=0.
\end{equation}
Let $v(t,r)$ be the solution to \eqref{1dSchrodinger} with the initial data
\begin{align}\label{v0}
v_0(r)= \frac{\alpha \; e^{-\frac{i}{4t_*}|r-1|^2}}{(1 + r^2)^m} \, + \, \frac{\alpha \;e^{-\frac{i}{4t_*}|r+1|^2}}{(1 + r^2)^m}, \quad m\in\left(\frac{1}{4}, \frac{1}{2}\right].
\end{align}
By Proposition \ref{linear DBU at a point}, this initial data in one dimension  forms a dispersive blow up at $(t_*,1)$ (and leads to a solution with the desired continuity properties). Thus, we conclude that for all $|x|=1$,
$$u(t_*,x) = v\Big(t_*,1\Big)=\infty.$$
\end{proof}

\subsection{Construction of a nonlinear DBU}
We now show that radial initial data
\begin{align}\label{u0 3}
u_0(x) = \frac{v_0( |x|)}{|x|},
\end{align}
where $v_0$ is given by \eqref{v0} leads to the solution to the NLS with a DBU on the unit sphere. The proof again relies on the smoothing estimate Proposition \ref{smoothing} for controlling the nonlinear term, but first we need to make sure parameters $s, p$ and $m$ are chosen so that $u_0 \in H^s(\R^3)$ and that the dispersive blow up develops. 

The following Lemma, which is proven in the Appendix, is useful in finding a sufficient condition for $u_0 \in H^s(\R^3)$.

\begin{lemma}\label{app-sphere}
Let  $u_0(x) = \frac{v_0( |x|)}{|x|}$, where $v_0$  is given \eqref{v0}. If $v_0 \in H^s(\R)$ with $s \in [0, \tfrac{1}{2})$, then $u_0 \in H^s(\R^3)$.
\end{lemma}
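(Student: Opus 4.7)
\noindent\textbf{Proof plan for Lemma~\ref{app-sphere}.} The plan is to compute the 3D Fourier transform of the radial function $u_0$, identify it with a Hilbert-transform image of $\hat v_0$, and then conclude via a classical weighted $L^2$ bound for the Hilbert transform. The case $s=0$ is immediate from the evenness of $v_0$: in polar coordinates,
$$\|u_0\|_{L^2(\R^3)}^2 = 4\pi \int_0^\infty |v_0(r)|^2\, dr = 2\pi \|v_0\|_{L^2(\R)}^2,$$
so from now on I assume $s\in(0,\tfrac12)$.

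Starting from the standard radial Fourier formula in $\R^3$,
$$\hat u_0(\xi) = \frac{4\pi}{|\xi|} \int_0^\infty v_0(r)\,\sin(r|\xi|)\, dr,$$
I would convert the half-line sine integral into a full-line Fourier integral using that $v_0$ is even on $\R$. Setting $v_0^{\rm odd}(r) \defeq \operatorname{sign}(r)\, v_0(r)$, a one-line manipulation gives $\int_0^\infty v_0(r)\sin(rk)\,dr = \tfrac{i}{2}\,\widehat{v_0^{\rm odd}}(k)$; combined with the identity $\widehat{\operatorname{sign}\cdot f}=-iH(\hat f)$, where $H$ is the one-dimensional Hilbert transform, this produces the clean representation
$$\hat u_0(\xi) \;=\; \frac{2\pi}{|\xi|}\, H(\hat v_0)(|\xi|).$$
Because $\hat v_0$ is even, $H(\hat v_0)$ is odd. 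Passing to radial integration in $\R^3$, the factor $|\xi|^2$ from $d\xi$ cancels $|\xi|^{-2}$ in $|\hat u_0|^2$, and symmetrizing in $k$ gives
$$\|u_0\|_{H^s(\R^3)}^2 \;\sim\; \int_\R (1+k^2)^s |H(\hat v_0)(k)|^2\, dk.$$

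The remaining step is a weighted $L^2$ bound for $H$. The weight $w(k)=(1+k^2)^s$ is bounded near the origin and behaves like $|k|^{2s}$ at infinity, and such a power weight lies in the Muckenhoupt class $A_2(\R)$ precisely when $|s|<\tfrac12$. The Hunt--Muckenhoupt--Wheeden theorem then yields
$$\int_\R (1+k^2)^s |H(\hat v_0)(k)|^2\, dk \;\lesssim\; \int_\R (1+k^2)^s |\hat v_0(k)|^2\, dk \;=\; \|v_0\|_{H^s(\R)}^2,$$
hence $\|u_0\|_{H^s(\R^3)}\lesssim\|v_0\|_{H^s(\R)}<\infty$. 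I expect the main obstacle to be the Fourier identification of $\hat u_0$ with $H(\hat v_0)/|\xi|$; once that is in hand, the restriction $s<\tfrac12$ surfaces naturally as the $A_2$ threshold for $(1+k^2)^s$, which coincides with the $H^s(\R^3)$-regularity threshold of the unavoidable $1/|x|$ singularity carried by $u_0$ at the origin whenever $v_0(0)\neq 0$.
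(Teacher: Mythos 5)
Your argument is correct, but it takes a genuinely different route from the paper. The paper stays on the physical side: it applies the fractional Leibniz rule of Nahas--Ponce to split $|\nabla|^s\bigl(v_0(|x|)/|x|\bigr)$ into a term where the derivatives fall on $v_0$ (handled by passing to polar coordinates) and a term $\|v_0(|x|)\,|x|^{-1-s}\|_{L^2(\R^3)}$, which after reduction to one dimension is controlled by H\"older and the Sobolev embedding $\dot H^s(\R)\hookrightarrow L^{\frac{2}{1-2s},2}(\R)$ in Lorentz norms; there the restriction $s<\tfrac12$ enters through that embedding. You instead work entirely on the Fourier side, using the exact three-dimensional radial identity $\hat u_0(\xi)=\tfrac{2\pi}{|\xi|}H(\hat v_0)(|\xi|)$ (valid because $v_0$ from \eqref{v0} is even) to reduce the lemma to the Hunt--Muckenhoupt--Wheeden weighted bound for the Hilbert transform with the $A_2$ weight $(1+k^2)^s$, where $s<\tfrac12$ is exactly the $A_2$ threshold. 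Your route is arguably cleaner and makes the sharpness of the range of $s$ more transparent, at the cost of being tied to $d=3$ and to the specific weight $1/|x|$, whereas the Leibniz-rule argument is softer and more portable. Two small points to tidy up: since $v_0$ decays only like $(1+r^2)^{-m}$ with $m\le\tfrac12$, it need not lie in $L^1(\R)$, so you should establish the Fourier identity first for even Schwartz data and extend by $L^2$-continuity (your $s=0$ computation supplies the needed continuity); and you should record that $H(\hat v_0)$ being odd is what lets you symmetrize the radial integral over $(0,\infty)$ into an integral over $\R$.
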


 Now, using  Lemma \ref{nonlinear initial data} to make sure that $v_0 \in H^s(\R)$, which in turn ensures that $u_0 \in H^s(\R^3)$, and combining that with  Proposition \ref{LWP}\, and Proposition \ref{smoothing}\, we get
\begin{align}\label{m3}
\max\Big\{1-\frac{1}{p}, \;\frac{1}{4}\Big\}<m\leq\frac{1}{2}.
\end{align}
For such $m$ to exists, $p$ needs to satisfy $p<2$. Hence the condition on the order of nonlinearity in the statement of the Theorem \ref{MainTheorem3}. We are now ready to prove Theorem \ref{MainTheorem3}.
\begin{proof}[Proof of Theorem \ref{MainTheorem3}]
 Suppose $1<p<2$ and let $m$ be such that \eqref{m3} holds. For such an $m$ define initial data $u_0$ as in \eqref{u0 3} and let $u(t,x)$ to be the corresponding solution to the NLS. Again, by choosing sufficiently small $\alpha$ (see \eqref{v0}), one may assume that $t_*$ is contained in the  interval of existence of $u$. Then, as a consequence of Lemma \ref{linear DBU on a sphere} and Proposition \ref{smoothing}, the linear part in the integral equation \eqref{Duhamel} blows up at $(t_*,x)$ for all $|x|=1$, and the nonlinear part remains bounded during the existence time. Therefore, we conclude that $u(t,x)$ blows up at the unit sphere at time $t_*$.
\end{proof}

\begin{remark}
We restrict our attention to three dimensions, because if $u(t,x)$ is a radial solution to the linear Schr\"odinger equation in $\mathbb{R}^d$, then $v(t,r)  = r^{\frac{d-1}{2}} u(t,r)$ solves the 1-dimensional Schr\"odinger equation with an extra term unless $d=3$.
\end{remark}

\appendix

\section{Proof of Technical Lemma}

In the appendix, we prove a technical  Lemma \ref{app-sphere}, that provides sufficient condition for the initial data used in the construction of a dispersive blow up on a sphere to be in $H^s$.

\subsection{Initial data for DBU on a sphere } Here we prove Lemma \ref{app-sphere} which provides a sufficient condition for the radial initial data $u_0(x) = \frac{v_0( |x|)}{|x|}$, where $v_0$  is given \eqref{v0}, to be in $H^s(\R^3)$.
\begin{proof}[Proof of Lemma \ref{app-sphere}]
By changing to spherical coordinates, it is easy to check that
$$\|u_0\|_{L^2(\mathbb{R}^3)}=\Big\|\frac{v_0(|x|)}{|x|}\Big\|_{L^2(\mathbb{R}^3)}\sim\|v_0\|_{L^2(\mathbb{R})}.$$
Moreover, by the fractional Leibniz rule \cite[Proposition 1]{NP},
\begin{align*}
\|u_0\|_{\dot{H}^s(\mathbb{R}^3)}&=\Big\||\nabla|^s\Big(\frac{v_0(|x|)}{|x|}\Big)\Big\|_{L^2(\mathbb{R}^3)}\leq \Big\|\frac{(|\nabla|^sv_0)(|x|)}{|x|}\Big\|_{L^2(\mathbb{R}^3)}+\Big\|\Big(|\nabla|^s\frac{1}{|x|}\Big)v_0(|x|)\Big\|_{L^2(\mathbb{R}^3)}\\
&\lesssim\||\nabla|^sv_0\|_{L^2(\mathbb{R})}+\Big\|\frac{v_0(|x|)}{|x|^{1+s}}\Big\|_{L^2(\mathbb{R}^3)}\lesssim\||\nabla|^sv_0\|_{L^2(\mathbb{R})},
\end{align*}
where in the last step, we used H\"older inequality and Sobolev inequality in Lorentz norms (see \cite{Gra}, for example) to get
$$\Big\|\frac{v_0(x)}{|x|^{1+s}}\Big\|_{L^2(\mathbb{R}^3)}\sim \Big\|\frac{v_0(r)}{|r|^s}\Big\|_{L^2(\mathbb{R})}\lesssim \|v_0\|_{L^{\frac{2}{1-2s},2}(\mathbb{R})}\lesssim\||\nabla|^sv_0\|_{L^2(\mathbb{R})} .$$
\end{proof}

\end{document}